\newcommand{\F}{\mathbb{F}}
\newcommand{\Q}{\mathbb{Q}}
\newcommand{\Z}{\mathbb{Z}}
\newcommand{\QQ}{\mathcal{Q}}
\newcommand{\C}{\mathcal{C}}
\newcommand{\E}{\mathcal{E}}
\newcommand{\J}{{\mathcal J}}
\newcommand{\K}{{\mathcal K}}
\newcommand{\cZ}{\mathcal{Z}}
\newcommand{\ovb}{\overline{B}}
\newcommand{\X}{\bf{X}}
\def\FAA{\Q \kern .05em \langle y_1,y_2,\dots \rangle}
\def\NC{\Z \kern .05em \langle y_1,y_2,\dots \rangle}
\theoremstyle{plain}
\newtheorem{lemma}{Lemma}[section]
\newtheorem{theorem}{Theorem}[section]
\newtheorem{proposition}{Proposition}[section]
\newtheorem{definition}{Definition}[section]
\newtheorem{conjecture}{Conjecture}[section]
\theoremstyle{remark}
\newtheorem{example}{Example}[section]
\newcommand{\vanish}[1]{}
\begin{document}
\title[Properties of $\Sigma _{D_n}$ and  $\Sigma (D_n,p)$]
{Properties of the descent algebras of type $D$}
\author{Stephanie van Willigenburg}
\address{Department of Mathematics, University of British Columbia, 1984 Mathematics Road, Vancouver, BC, V6T 1Z2, Canada}
\email{steph@math.ubc.ca}
\thanks{Partially supported by the National Sciences and Engineering Research Council of Canada.}
\begin{abstract}
We establish simple combinatorial descriptions of the radical and irreducible 
representations specifically for the 
descent algebra of a Coxeter group of type $D$ over any field. 

\textit{Key Words:} Coxeter group, descent algebra, type $D$.
\end{abstract}

\maketitle
\section{Introduction \label{intro}}

In enumerative combinatorics, quasisymmetric functions arise naturally in a 
variety of of areas, for 
example, the chromatic symmetric function of a graph \cite{chow}, the enumerator 
of partitions of a poset 
\cite{gessel, stembridge}, and the flag vector of a graded poset 
\cite{ehrenborg}. Moreover, the 
universality of the Hopf algebra of quasisymmetric  functions $\QQ $ has 
recently been studied by Aguiar 
and Bergeron (personal communication). The dual of this algebra, formed from the 
descent algebras of the 
symmetric groups, also arises in a variety of contexts such as Hochschild 
homology 
\cite{bergeron-hochschild}, card shuffling and hyperplane arrangements 
\cite{fulman}, and planar binary 
trees \cite{loday-ronco}, in addition to being isomorphic to the Hopf algebra of 
noncommutative symmetric 
functions \cite{gelfand-etal}.
Since a descent algebra exists for every Coxeter group \cite{solomon-mackey} it 
is natural to study those 
descent algebras stemming from other Coxeter groups (\emph{e.g.} 
\cite{bergeron-bergeron, bergberg-ht, 
bergeron-vW}) in the hope that a deeper understanding of $\QQ $ or the descent 
algebras of the symmetric 
groups might be obtained.   

In this paper we extend the study initiated in \cite{bergeron-vW} to establish 
straightforward combinatorial
descriptions of some algebraic properties of the descent algebras of (the 
Coxeter groups of) type $D$. 
 Throughout we utilise notation first seen in \cite{bergeron-vW} which allows us 
to state our results in a vein similar to that in \cite{atkinson-vW, 
garsia-reutenauer} for the symmetric groups case and \cite{bergeron-bergeron, 
bergeron-n, vanwilli-hyper} for the hyperoctahedral groups case. In these cases 
this notation 
not only allowed results to be stated clearly and concisely but also aided the 
realisation of the complete algebraic structure in each case. Definitions are 
given in the remainder of this section, and we 
state some results concerning our algebras over a field of characteristic zero in 
Section ~\ref{char0}. We 
then generalise this to a field of finite characteristic in Section 
~\ref{charp}. Finally we discuss open 
problems that could be addressed next.

The author is grateful to Louis Billera and the referees for valuable suggestions on earlier 
drafts of the paper.

\subsection{The descent algebras of Coxeter groups}\label{dacox}

Let $W$ be a Coxeter group with generating set $S$, let $J$ be a subset of  $S$,  
$W_J$ be the subgroup generated by $J$,  
$X_J$ ($X_J^{-1}$) be the unique set of minimal
length left (right) coset representatives of $W_J$,
and ${\mathcal X}_J$ be the formal sum of the elements
in $X_J$.   Solomon proved \cite[Theorem 1]{solomon-mackey} for $J,K,L$ being 
subsets of $S$

\begin{equation}{\mathcal X}_J{\mathcal X}_K=\sum _L a_{JKL}{\mathcal 
X}_L\label{sol}\end{equation}
where $a_{JKL}$ is the number of elements $x\in X_J^{-1}\cap X_K$ such that
$x^{-1}Jx\cap K=L$. Moreover the ${\mathcal X}_J$ form a basis for the 
\emph{descent algebra of $W$} (over 
$\Q$), denoted $\Sigma _W$, and the radical of $\Sigma _W$ is spanned by all 
differences 
$\mathcal{X}_{J}-\mathcal{X}_{K}$
where $J$ and $K$ are conjugate subsets of $S$ \cite[Theorem 3]{solomon-mackey}.

If $J$ is a subset of $S$, let $\phi _J$ be the permutation character of $W$ 
acting on the right cosets of 
$W_J$, and $c_J$ be a Coxeter element of $J$, that is, a product of all the 
elements of $J$ taken in some 
order. If we choose a set of representatives of the conjugacy classes of subsets of $S$ we 
find (for example 
\cite{mdapmod}) the columns of the matrix $R= [ \phi _J (c_K)]$ with rows and 
columns indexed by 
this set can be taken as the irreducible representations of $\Sigma _W$.

We delay the discussion of descent algebras over a field of prime characteristic 
until Section 
~\ref{charp}.

\subsection{The Coxeter groups of type $D$}

From now on we consider $D_n$, the $n$-th Coxeter group of type $D$,  which for 
our purposes will be the 
group
acting on the set $$\{-n, \ldots ,-1,1,\ldots ,n\}$$ whose
Coxeter generators are the set
$S=\{s_{1'},s_1,s_2,\ldots ,s_{n-1}\}$, where 
$s_i$ is the product of transpositions $(-i\!
-\! 1,\, -i)(i,i+1)$ for
$i=1,2,\ldots ,n-1$, and
$s_{1'}$ is the product of transpositions $(-2,1)(-1,2)$. Observe from this 
definition that if $\sigma \in 
D_n$ then $\sigma(-i) = - \sigma (i)$ and the parity of $\sigma$ is even, 
\emph{i.e.}, the multiplicity of 
negative numbers in $ \{ \sigma(1), \ldots, \sigma(n)\}$ is even.

\section{The descent algebras of type $D$}\label{char0}

It transpires that for the Coxeter group $D _n$,   (\ref{sol}) can be rewritten 
in a manner that is 
open to combinatorial manipulation  \cite[Theorem 1]{bergeron-vW}. This in turn 
allows us to derive 
algebraic properties of the algebra in a manner similar to the  symmetric or 
hyperoctahedral groups cases 
(see \cite{garsia-reutenauer, bergeron-bergeron} resp.). However, before we can 
do this we need to relabel 
our basis elements.

Recall a composition $\kappa$ of a non-negative integer $n$ is an ordered list  
$[\kappa_1,\kappa_2,\ldots ,\kappa_k]$ of positive integers whose sum is $n$, 
denoted by
$\kappa\vDash n$. We   call the integers $\kappa_1,\kappa_2,\ldots ,\kappa_k$ 
the
 \textit{components} of
$\kappa$. By convention  $\kappa = [\ ]$ is the unique composition of $0$. If 
$\kappa$ is a non-increasing 
list, \emph{e.g.}, $[3,2,1,1]$,  we call $\kappa$ a \emph{partition}, and the 
$\kappa _i$ are called 
\emph{parts}.

 There exists a natural correspondence between the subsets of $S$ and the 
multiset
$\C(n)$, consisting of the union of the sets \(\C _{<n}=\{\kappa|\kappa\vDash 
m,m\leq
        n-2\}\), \( \C_1=\{\kappa|\kappa\vDash n,\kappa_1=1 \}\) 
\( \C_n=\{\kappa|\kappa\vDash n,\kappa_1\geq 2 \}\) and 
  \( \C_n'=\{\kappa|\kappa\vDash n,\kappa_1\geq
2\}\). Observe   $\C_n$ and
$\C_n'$ are two  copies of the same set. The     subset   
corresponding to $\kappa\in\C(n)$ is
 \begin{enumerate}
\item \(\{s_{\kappa_0}, s_{\kappa_0+\kappa_1},\ldots , s_{\kappa_0+\ldots
+\kappa_{(k-1)}}\}\) if \(\kappa\in\)$\C_{<n}$ where \(\kappa_0=n-m,\)
\item \(\{s_{1'}, s_{1}, s_{1+\kappa_2},\ldots , s_{1+\kappa_2+\ldots 
+\kappa_{(k-1)}}\}\)  if 
\(\kappa\in\)\( \C_1\),
\item \(\{s_{1'},s_{ \kappa_1},\ldots ,  s_{\kappa_1+\ldots
+\kappa_{(k-1)}}\}\)  if \(\kappa\in\)\( \C_n\), 
\item \(\{s_{1}, s_{\kappa_1},\ldots , s_{\kappa_1+\ldots +\kappa_{(k-1)}}\}\) 
if \(\kappa\in\)\( \C_n'\).
 
\end{enumerate}

Our relabelling is then simply 
$B_\kappa :={\mathcal X}_{ J^c}$ where  $J^c$ is the complement of $J$ in $S$, 
and $\kappa$ is the 
composition in $\C(n)$ corresponding to
$J$ by the
 above bijection.

\begin{definition}
If $\kappa ,\nu\in\C(n)$ then we say that $\kappa\approx\nu$ if 
the components of $\kappa$ can be re-ordered to give to components
of
$\nu$, but $\kappa$ and $\nu$ do not satisfy either
\begin{enumerate}
\item $\kappa\in \C _n$, $\nu\in\C _n'$, all components are even, or
\item $\kappa\in \C _n'$, $\nu\in\C _n$, all components are even.
\end{enumerate}
\end{definition}

\begin{example} In $\C(6)$ let $[2,1,2,1], [4,2]\in\C _6$ and $[1,2,2,1]^\vee, 
[2,4]^\vee \in\C _6'$ where 
the $^\vee$ is simply to distinguish the compositions in $\C _6$ from those in 
$\C _6'$. Then 
$[2,1,2,1]\approx [1,2,2,1]^\vee$ but $[4,2]\not\approx 
[2,4]^\vee$.\end{example}

In fact this equivalence relation completely determines the conjugacy classes of 
subsets of $S$.

\begin{lemma}\label{conj}
$J$ and $K$ are conjugate subsets in $S$ if and only if $\kappa\approx\nu$ in 
$\C(n)$, where  $\kappa, \nu$  corresponds to $J^c,K^c$ respectively.
\end{lemma}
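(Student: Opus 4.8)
The plan is to reduce the statement to conjugacy of parabolic subgroups and then settle that question in the concrete signed-permutation model of $D_n$. Since $J$ and $K$ are conjugate subsets of $S$ exactly when the parabolic subgroups $W_J$ and $W_K$ are conjugate in $D_n$ (the standard equivalence for Coxeter systems), I would first read off, from the four cases of the bijection, the structure of $W_J$ in terms of $\kappa$, where $\kappa$ corresponds to $J^c$. Working with the action on $\{-n,\ldots,-1,1,\ldots,n\}$, the generators $s_1,\ldots,s_{n-1}$ act as sign-preserving adjacent transpositions while $s_{1'}$ introduces a sign change, so a genuine ``type $D$'' block occurs precisely when both $s_{1'}$ and $s_1$ survive in $J$. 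Complementing the subsets in cases (1)--(4), I expect case (1) to yield one $D$-block of size $n-m\ge 2$ together with ``type $A$'' blocks (ordinary symmetric groups on consecutive coordinates, no sign change) of sizes $\kappa_1,\ldots,\kappa_k$, while cases (2)--(4) yield no $D$-block and $A$-blocks whose sizes are exactly the parts of $\kappa$, with case (4) differing from case (3) only by a ``twist'' of the first block through $s_{1'}$ in place of $s_1$.

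With this dictionary I would prove both implications using two conjugation invariants. The first is the multiset of sizes of the orbits of $W_J$ on the signed coordinates: a $D$-block of size $d\ge 2$ forms a single orbit of size $2d$, whereas an $A$-block of size $m$ forms two orbits of size $m$. This is manifestly preserved under conjugation and recovers both the size of the $D$-block and the multiset of $A$-block sizes; it therefore forces conjugate $W_J,W_K$ to have $\kappa$ and $\nu$ equal as partitions and lying in the same family (namely $\C_{<n}$, or else $\C_1\cup\C_n\cup\C_n'$). The second invariant, needed only to separate the $\C_n$ and $\C_n'$ copies, is a parity invariant: realizing $D_n$ as the index-two subgroup of even elements of the hyperoctahedral group $B_n$, all pure type-$A$ parabolics with a fixed multiset of block sizes are conjugate in $B_n$, and this single $B_n$-class splits into two $D_n$-classes exactly when its $B_n$-normaliser contains no odd element. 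For the forward direction I would compute that normaliser and show it lies inside $D_n$ if and only if every block size is even; for the reverse direction, when some part of $\kappa$ is odd I would construct an explicit even signed permutation that reorders the blocks and absorbs, using the odd block, the single sign change needed to convert the $s_{1'}$-twist into an $s_1$-twist.

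The main obstacle I anticipate is exactly this twist dichotomy, i.e.\ matching the exceptional clauses (1) and (2) in the definition of $\approx$ with the statement that the twisted and untwisted pure type-$A$ parabolics are $D_n$-conjugate if and only if some component of $\kappa$ is odd. Obtaining the ``only if'' cleanly will require a genuine parity argument rather than a counting argument, since when all parts are even the two subgroups are abstractly isomorphic and have identical orbit data, so nothing short of the normaliser computation distinguishes them. A secondary technical point to dispatch is the low-rank coincidences $D_2\cong A_1\times A_1$ and $D_3\cong A_3$: here I would verify directly that a size-$2$ or size-$3$ $D$-block is still separated from the corresponding products of $A$-blocks by its single orbit of size $2d$ on the signed coordinates, so that no spurious conjugacies are created and the correspondence between $\approx$-classes and conjugacy classes of subsets remains bijective.
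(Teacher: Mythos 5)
Your proposal is sound, but it takes a genuinely different route from the paper. The paper never leaves the level of the generator sets themselves: conjugation by $\sigma\in D_n$ carries each maximal run of consecutive generators in $J$ to a run in $K$, so the corresponding compositions have the same components, and the exceptional all-even case is killed by a direct parity count --- any $\sigma$ taking an interior run $\{s_i,\ldots,s_{i+2k}\}$ onto $\{s_{1'},s_2,\ldots,s_{2k+1}\}$ must involve an odd number of sign changes, hence lies outside $D_n$ --- after which the argument is observed to be reversible. You instead pass to the parabolic subgroups $W_J$, $W_K$ and argue structurally: your orbit-size invariant (a single orbit of size $2d$ for a $D$-block versus paired orbits for $A$-blocks) recovers the partition and separates $\C_{<n}$ from the pure type-$A$ families, and the twisted/untwisted dichotomy is settled by the index-two class-splitting criterion, via the computation that $N_{B_n}(W_J)$ --- generated by $W_J$, the block-wise sign changes, and sign-preserving permutations of equal blocks --- avoids odd elements exactly when every block size is even. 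Both approaches work; the trade-off is that the paper's argument is shorter and completely self-contained, whereas yours is heavier (subgroup conjugacy, a normaliser computation, the splitting criterion) but more conceptual: it explains structurally why all-even parts are the exceptional case, and it explicitly dispatches the low-rank coincidences $D_2\cong A_1\times A_1$ and $D_3\cong A_3$, which the paper's terser argument passes over in silence. Two small points to make rigorous in a write-up: (i) you only need the easy direction of the subset/parabolic conjugacy equivalence, since your reverse implication is witnessed by explicit elements that conjugate generator sets to generator sets, so the nontrivial Coxeter-theoretic citation can be avoided entirely; (ii) in the normaliser computation you should note that conjugation preserves signed cycle type, so the induced automorphism of each block $S_{\mu_i}$ preserves transpositions and is therefore inner, which rules out any $S_6$ anomaly and completes your description of $N_{B_n}(W_J)$.
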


\begin{proof}Let $J, K\subseteq S$. If $J$ and $K$ are conjugate subsets then  
it follows that there exists $\sigma\in D_n$ 
such that for all $s\in J$ we have $s ^\sigma =\hat{s}$ for some $\hat{s}\in K$. 
 Since 
$\sigma$ is a bijection it follows that if $s=(-b,-a)(a,b), t=(-c,-b)(b,c)\in J$ 
then if $s^\sigma = (-e,-d)(d,e)\in K$ then $t^\sigma$ must be of the form 
$(-f,-e)(e,f)\in K$. Hence the compositions $\kappa$ and $\nu$ that correspond 
to $J^c$ and $K^c$ 
via the natural correspondence have the same components.

However, if all the components of $\kappa$ and $\nu$ are even, $\kappa\in \C_n$ 
and $\nu\in \C _n '$ then 
no such $\sigma \in D_n$ exists, for $\sigma$ would be a bijection which maps 
some $\{ s_i, s_{i+1}, \ldots, s_{i+2k}\}\subseteq J$ where 
$s_{i-1},s_{i+2k+1}\not\in J$, onto $\{ s_1', s_2, \ldots, 
s_{2k+1}\}\subseteq K$. From here, it is straightforward to deduce that for this 
to occur, the parity of $\sigma$ must be odd, and so does not belong to $D_n$.

Similarly if $\kappa\in \C_n '$ and $\nu\in \C _n $ then   $\sigma$ would be a 
bijection which maps  $\{ s_1', s_2, \ldots, s_{2m+1}\} \subseteq J$ where 
$s_{2m+2}\not\in J$, onto some $\{ 
s_i, s_{i+1}, \ldots, s_{i+2m}\}\subseteq K$. The parity of $\sigma$ must 
again be odd.

Since this argument is reversible it follows that $J$ and $K$ are conjugate if 
and only if 
$\kappa\approx \nu$ and we are done. \end{proof}

We are now ready to reinterpret the form of the radical.

\begin{theorem} The radical of $\Sigma _{D_n}$ is spanned by all $B_\kappa 
- B _\nu$ such that $\kappa \approx \nu$ where  $\kappa,\nu\in\C(n)$.
\end{theorem}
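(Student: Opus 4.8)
The plan is to deduce the statement almost directly from Solomon's description of the radical together with Lemma~\ref{conj} and the relabelling $B_\kappa = \mathcal{X}_{J^c}$. By \cite[Theorem 3]{solomon-mackey}, the radical of $\Sigma_{D_n}$ is spanned by the differences $\mathcal{X}_L - \mathcal{X}_M$ as $L$ and $M$ range over all pairs of conjugate subsets of $S$. The entire task is therefore to re-express this spanning set in terms of the basis $\{B_\kappa\}$ and to check that conjugacy of the relevant subsets corresponds exactly to the relation $\approx$.

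First I would record that complementation $J \mapsto J^c$ is a bijection on the subsets of $S$, so that as $\kappa$ runs over $\C(n)$ the element $B_\kappa = \mathcal{X}_{J^c}$ (with $\kappa$ corresponding to $J$) runs over all $\mathcal{X}_L$, $L \subseteq S$. Concretely, for a fixed subset $L$ the unique $\kappa$ with $B_\kappa = \mathcal{X}_L$ is the composition corresponding to $L^c$, since $\mathcal{X}_{J^c} = \mathcal{X}_L$ forces $J = L^c$. Thus each spanning difference $\mathcal{X}_L - \mathcal{X}_M$ of the radical equals $B_\kappa - B_\nu$, where $\kappa$ and $\nu$ are the compositions corresponding to $L^c$ and $M^c$ respectively.

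It then remains to translate the conjugacy condition. Applying Lemma~\ref{conj} with $J = L$ and $K = M$ gives that $L$ and $M$ are conjugate precisely when $\kappa \approx \nu$, where $\kappa$ and $\nu$ correspond to $L^c$ and $M^c$ --- which is exactly the pair of compositions produced in the previous step. Hence the radical is spanned by the differences $B_\kappa - B_\nu$ with $\kappa \approx \nu$, $\kappa,\nu \in \C(n)$, as claimed.

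The main (and essentially only) point requiring care is the bookkeeping in the relabelling: one must ensure that the complement appearing in the definition $B_\kappa = \mathcal{X}_{J^c}$ is matched against the complement appearing in the statement of Lemma~\ref{conj} (which is itself phrased in terms of $J^c, K^c$), so that the two complementations align rather than introducing a spurious double complement. Once this alignment is verified there is no further obstacle, since the substantive work --- showing that the combinatorial relation $\approx$ captures conjugacy of subsets in $D_n$ --- has already been carried out in Lemma~\ref{conj}.
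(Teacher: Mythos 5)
Your proof is correct and follows exactly the same route as the paper, which deduces the theorem immediately from Solomon's description of the radical \cite[Theorem 3]{solomon-mackey} together with Lemma~\ref{conj}; your additional bookkeeping confirming that the complementation in the relabelling $B_\kappa = \mathcal{X}_{J^c}$ aligns with the complements $J^c, K^c$ appearing in Lemma~\ref{conj} is exactly the (implicit) content of the paper's one-line proof.
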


\begin{proof} This follows immediately from the description of the radical for 
$\Sigma _W$ \cite[Theorem 3]{solomon-mackey} and Lemma ~\ref{conj}.\end{proof}

Observe how our choice of notation gives a result whose statement has a similar 
flavour to the analogous result in the symmetric groups case \cite[Theorem 
1.1]{garsia-reutenauer} or the hyperoctahedral groups case \cite[Corollary 
2.13]{bergeron-n}.

The columns of the matrix $R= [ \phi _J (c_K)]$ can be taken as the 
irreducible representations of 
$\Sigma _W$, where the indexing set of rows and columns of $R$ are  pairwise 
non-conjugate subsets of $S$. Hence 
by Lemma ~\ref{conj} it   follows that for the descent algebra $\Sigma 
_{D_n}$ subsets whose complements correspond to  partitions in $\C(n) \setminus  \C_n '$ or  partitions in $\C_n '$ with all parts even form a suitable 
indexing set.

\section{The $p$-modular descent algebras of type $D$}\label{charp}

Since the structure constants of $\Sigma _W$ are the integers $a_{JKL}$, it 
follows the $\Z$-module  
spanned by all ${\mathcal X}_K$ forms a subring $\cZ _W$ of $\Sigma _W$,
 and for any prime $p$, all  combinations of the $\mathcal{X} _J$ that are integer multiples of $p$
form an an ideal, 
$\mathcal{P} _W$, of this subring. We define $\Sigma(W,p):=\cZ _W /\mathcal{P} 
_W$ to be the 
\textit{$p$-modular 
descent algebra} of $W$ and  $\Sigma(W,p)$ is our desired descent algebra over 
$\F _p$, the field of characteristic $p$.

In addition if ${\mathcal X}_J$ is a basis element of
 $\Sigma _W$, and $\rho $ is the natural homomorphism between ${\mathcal Z}_W$ 
and
 $\Sigma(W,p)$, then the set of all $\overline{{\mathcal X}}_J:=\rho ({\mathcal 
X}_J)$ forms a basis for 
$\Sigma(W,p)$. Observe
that the multiplicative action in $\Sigma(W,p)$ is the same as that in 
${\mathcal Z
}_{W}$ except we now reduce all coefficients modulo $p$. It was shown in 
\cite[Theorem 3]{mdapmod} that 
the radical of $\Sigma(W,p)$ is spanned by all 
 $\overline{{\mathcal  X}}_J-\overline{{\mathcal  X}}_K$ where $J,K$ are 
conjugate subsets 
 of $S$, together with all $\overline{{\mathcal  X}}_J$ for which $p$ divides 
 $a_{JJJ}$ -- the coefficient of of ${\mathcal X} _J$ in 
$\mathcal{X}_J\mathcal{X}_J\in \Sigma _W$. In 
addition it was shown a full set of distinct columns of the matrix $R= [ \phi _J 
(c_K)]$ whose entries have been reduced 
modulo $p$ can be taken as the irreducible representations of $\Sigma (W, p)$.

Following the conventions laid down so far in a natural way, let the basis of 
$\Sigma (D_n, p)$ be denoted 
by $\{ \ovb _\kappa\} _{\kappa\in\C(n)}$. Then the radical and irreducible 
modules can be described using 
the following two notions.

We say a composition has a \emph{component of multiplicity $m$} if $m$ of its 
components are of equal 
value, \emph{e.g.} $[1,3,1]$ has a component of multiplicity $2$,
and an \emph{$m$-regular partition} of a positive integer $n$ is a partition with 
no part divisible by $m$.

\begin{theorem}\label{dn-pmod}
Let $\kappa, \nu\in \C(n)$. If $p\neq 2$, the radical of $\Sigma (D_n,p)$ is 
spanned by all $\ovb _\kappa 
- \ovb _\nu$ such that $\kappa \approx \nu$, together with all $\ovb _\kappa$ 
where $\kappa$ contains a 
component of multiplicity $p$ or more. However if $p=2$ and $n$ is even the 
spanning set consists of all 
$\ovb _\kappa$, where $\kappa\neq [\ ]$;  if $p=2$ and $n$ is odd the 
spanning set consists of 
all $\ovb _\kappa$, where $\kappa\not\in \{ [\ ], [n], [n]^\vee\}, $ and $\ovb 
_{[n]} - \ovb 
_{[n]^\vee}$.\end{theorem}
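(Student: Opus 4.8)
The plan is to invoke the general description of the radical of $\Sigma(W,p)$ from \cite[Theorem 3]{mdapmod}, which says the radical is spanned by the differences $\ovb_J - \ovb_K$ over conjugate pairs $J,K$, together with those $\ovb_J$ for which $p \mid a_{JJJ}$. By Lemma~\ref{conj} the first family translates directly into the differences $\ovb_\kappa - \ovb_\nu$ with $\kappa \approx \nu$, so the entire content of the theorem reduces to determining, combinatorially, exactly which compositions $\kappa \in \C(n)$ satisfy $p \mid a_{\kappa\kappa\kappa}$, where $a_{\kappa\kappa\kappa}$ denotes the coefficient of $B_\kappa$ in $B_\kappa B_\kappa$.

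The first and main step is therefore to compute $a_{\kappa\kappa\kappa}$ in closed form. I expect this to be the analogue of the symmetric-group formula, where the self-structure-constant is a product of factorials of the multiplicities of the components of $\kappa$ (i.e. $\prod_i m_i!$ if the value $i$ occurs with multiplicity $m_i$). I would extract this from the explicit multiplication rule \cite[Theorem 1]{bergeron-vW}, specializing $J=K=L$; the count $a_{JJJ}$ records the number of $x \in X_J^{-1}\cap X_J$ with $x^{-1}Jx \cap J = J$, i.e. the elements stabilizing the relevant set-partition data, which in the combinatorial language should be governed by permutations of equal components. For $p \neq 2$ this immediately yields the criterion $p \mid a_{\kappa\kappa\kappa}$ iff some component has multiplicity at least $p$ (since $p \mid m!$ iff $m \geq p$), giving the stated spanning set. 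The main obstacle is handling the type-$D$ subtleties introduced by the two copies $\C_n$ and $\C_n'$ and the $s_{1'}$ generator: I must check that the extra conjugation data in Lemma~\ref{conj} (the all-even-components exception) does not alter the count of self-fixing elements, so that the factorial formula survives intact away from $p=2$.

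The second step is the special analysis for $p=2$. Here I would first note that $2 \mid m!$ already when $m \geq 2$, so every $\kappa$ with a repeated component is absorbed into the radical, and what remains to be understood are the multiplicity-free compositions together with the residual effect of the $\approx$ relation. I would argue that for multiplicity-free $\kappa$ the value $a_{\kappa\kappa\kappa}$ is odd unless a parity obstruction forces an even count; tracking this parity, together with the collapse of nearly all difference relations, should show that when $n$ is even \emph{every} nonempty $\ovb_\kappa$ lies in the radical. When $n$ is odd the compositions $[n]$ and $[n]^\vee$ (lying in $\C_n$ and $\C_n'$) escape, because $n$ odd forces $[n] \not\approx [n]^\vee$ while their individual self-products have odd coefficient; only the single difference $\ovb_{[n]} - \ovb_{[n]^\vee}$ survives as a genuine radical element, consistent with the two copies being distinct conjugacy data.

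To assemble the proof I would present it as: (i) quote the radical description and Lemma~\ref{conj}; (ii) state and prove the factorial formula for $a_{\kappa\kappa\kappa}$ via \cite[Theorem 1]{bergeron-vW}, with explicit attention to the $\C_n/\C_n'$ bookkeeping; (iii) deduce the $p \neq 2$ case; (iv) carry out the $p = 2$ parity count splitting on the parity of $n$, checking the endpoints $[n], [n]^\vee$ separately. The delicate point throughout is ensuring the type-$D$ doubling does not spuriously change any factorial or parity, so I would verify the small cases (such as $\C(6)$ in the earlier example) to confirm the formula before trusting it in general.
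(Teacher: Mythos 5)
Your skeleton (quote the radical description of $\Sigma(W,p)$ from \cite[Theorem 3]{mdapmod}, translate the conjugacy differences via Lemma~\ref{conj}, then decide which $\kappa$ satisfy $p \mid a_{\kappa\kappa\kappa}$) matches the paper's, but the paper does not re-derive the divisibility criterion at all: it cites \cite[Lemma 10]{mdapmod}, which already records exactly when $p$ divides $a_{JJJ}$ in type $D$. The genuine gap in your proposal is in your step (ii), where you attempt to derive it yourself: the factorial formula $a_{\kappa\kappa\kappa} = \prod_i m_i!$ is \emph{false} in type $D$. The paper's own worked example shows this: in $\Sigma_{D_4}$ one has $B_{[2]}B_{[2]} = 2B_{[2]} + B_{[1,1]} + B_{[2,2]} + B_{[2,2]'} + 2B_{[1,1,1,1]}$, so $a_{[2],[2],[2]} = 2$ even though $[2]$ is multiplicity-free. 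In type $D$ (as in type $B$) the self-structure constants carry extra powers of $2$, whose exponent depends on which of the pieces $\C_{<n}$, $\C_1$, $\C_n$, $\C_n'$ the composition lies in and on parity conditions coming from the $\mathcal{Y}$-condition in the Bergeron--van Willigenburg multiplication rule. For odd $p$ these powers of $2$ are invisible, so your $p \neq 2$ conclusion happens to survive, but the formula you propose to prove is simply not true as stated.

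This wrong formula then undermines your $p=2$ analysis, where you have the parity backwards: you assert that multiplicity-free $\kappa$ generically have odd $a_{\kappa\kappa\kappa}$ ``unless a parity obstruction forces an even count,'' when in fact evenness is the generic behaviour --- essentially every nonempty $\kappa$ has even $a_{\kappa\kappa\kappa}$, the only exceptions being $[\ ]$ and, when $n$ is odd, $[n]$ and $[n]^\vee$. Concretely, for $\kappa = \nu = [n]$ the filled templates contributing $B_{[n]}$ to $B_{[n]}B_{[n]}$ have $y$-sum either $0$ or $n$, and the $y$-sum is forced to be even; hence $a_{[n],[n],[n]} = 2$ when $n$ is even but $1$ when $n$ is odd. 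This parity-dependence of the power of $2$ is exactly what produces the split between the $n$ even and $n$ odd statements; note also that for $n$ even we have $[n] \not\approx [n]^\vee$ (all components even), so $\ovb_{[n]}$ can only enter the radical through $2 \mid a_{[n],[n],[n]}$ --- the difference relations cannot put it there, contrary to what ``the collapse of nearly all difference relations'' suggests. As written, your $p=2$ paragraph restates the desired conclusion rather than proving it; to complete the argument you would need the genuine type-$D$ computation of $a_{\kappa\kappa\kappa}$ including its powers of $2$, which is precisely the content of \cite[Lemma 10]{mdapmod} that the paper invokes.
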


\begin{proof}This result is a straightforward consequence of \cite[Theorem 3, 
Lemma 10]{mdapmod} and Lemma ~\ref{conj}.\end{proof} 

Again observe the similarity of Theorem ~\ref{dn-pmod} to \cite[Theorem 
2]{atkinson-vW} and \cite[Theorem 2]{vanwilli-hyper}.

A full set of distinct columns of the matrix $R= [ \phi _J (c_K)]$ whose entries 
are reduced modulo $p$ can be 
taken as the irreducible representations of $\Sigma (W, p)$,   and in fact it 
was shown in  \cite{mdapmod} that a suitable set consisted of  all columns indexed by a 
subset $J$ 
such that $p$ does not divide $a_{JJJ}$. Therefore considering the descent 
algebra $\Sigma (D_n, p)$, by \cite[Lemma 10]{mdapmod}, 
if $p\neq 2$  then subsets whose complements correspond to  $p$-regular partitions 
in 
$\C(n) \setminus \C_n '$ or   
$p$-regular partitions in $\C_n '$ with all parts even form a suitable indexing  set. If $p=2$ and $n$ is 
even then the subset whose complement corresponds to  $[\ ]$ is a suitable 
indexing set, whereas 
if $p=2$ and $n$ is odd, then
subsets whose complements correspond  to $[\ ]$ or  $[n]$ suffice.

\section{Conclusion}

With these results, the next avenue of research  would be to establish further 
properties for $\Sigma 
_{D_n}$ or $\Sigma(D_n,p)$, for example  a set of minimal idempotents, or the 
Cartan matrix. For 
this to be achieved the action of $\Sigma _{D_n}$ on ``$D_n$-Lie monomials"  
would have to be found,  as was necessary in the symmetric   and hyperoctahedral 
groups situations 
\cite{garsia-reutenauer, bergeron-n}. However there   has yet to be 
success in finding such monomials.

We  suspect, by observation of the symmetric and hyperoctahedral groups cases, 
that the action may be derived from multiplication in $\Sigma _{D_n}$. More 
precisely let $A=\{ a_1,a_2,\ldots, a_M\}$ be an alphabet   of $M$ distinct 
letters. The \emph{free Lie algebra} $L(A)$ is the algebra generated by the 
letters of $A$ and the bracket operation 
$$[P,Q]=PQ-QP.$$
Now consider homogeneous elements $P_i ^{(\kappa _i)}$ of degree $\kappa _i$ in 
$L(A)$. We call the concatenation product of $P_1 ^{(\kappa _1)},\ldots ,P_k 
^{(\kappa _k)}$ an \emph{$S_n$-Lie monomial} if $\sum _i \kappa _i = n$. 

Multiplication in $\Sigma _{S_n}$, with basis denoted by $\{ 
B_\kappa\}_{\kappa\vDash n}$ can be described as follows (say \cite[Theorem 
2]{vanwilli-sn}). If $\kappa, \nu$ are compositions of $n$, then
 \[B_{\kappa}B_{\nu}=\sum _{\boldsymbol{z}} B_{r(\boldsymbol{z})}\]
where the sum is over all matrices $\boldsymbol{z}=(z_{ij})$ with non-negative 
integer entries that satisfy
\begin{enumerate}
\item $\sum _i z_{ij}=\kappa _j$,
\item $\sum _j z_{ij}=\nu _i$.\end{enumerate}
For each matrix, $\boldsymbol{z}$,    $r(\boldsymbol{z})$ is the composition 
obtained by reading the non-zero   entries of the matrix $\boldsymbol{z}$ by 
row.
\begin{example}
In $\Sigma _{S_4}$ if calculating $B_{[2,1,1]}B_{[2,2]}$ then the matrices 
satisfying our conditions are
\[
\left(\begin{array}{ccc}
2&0&0\\
0&1&1
\end{array}\right)\quad
\left(\begin{array}{ccc}
0&1&1\\
2&0&0
\end{array}\right)\quad
\left(\begin{array}{ccc}
1&1&0\\
1&0&1
\end{array}\right)\quad
\left(\begin{array}{ccc}
1&0&1\\
1&1&0
\end{array}\right)
.\]

Hence \[B_{[2,1,1]}B_{[2,2]}=   B_{[2,1,1 ]} +   B_{[1,1,2 ]} + 2 B_{[1,1,1,1 
]}.\]\end{example}

By comparison of Theorem 2.1 \cite{garsia-reutenauer} and the above description 
of multiplication in 
$\Sigma _{S_n}$ using matrices we observe that the action of $\Sigma _{S_n}$ on 
$S_n$-Lie monomials can be stated as follows.

\begin{proposition}
Let $\kappa ,\nu\vDash n$ and $P^{(\kappa)}:=P_1 ^{(\kappa _1)}\ldots P_k 
^{(\kappa 
_k)}$ then
$$P^{(\kappa)}B_\nu=\sum _{\boldsymbol{z}} P^{(r(\boldsymbol{z}))}$$ where the 
sum 
is over all matrices $\boldsymbol{z}=(z_{ij})$ with non-negative integer entries 
that satisfy
\begin{enumerate}
\item $\sum _i z_{ij}=\kappa _j$,
\item $\sum _j z_{ij}=\nu _i$,
\item there is exactly one non-zero entry in each column.\end{enumerate}
For each matrix, $\boldsymbol{z}$,    $r(\boldsymbol{z})$ is the composition 
obtained by reading the non-zero   entries of the matrix $\boldsymbol{z}$ by 
row. Moreover $P^{(\kappa)}B_\nu=0$ unless adjacent components of $\kappa$ sum 
to give the components of $\nu$.  
\end{proposition}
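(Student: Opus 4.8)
The plan is to derive the action of $\Sigma_{S_n}$ on $S_n$-Lie monomials from the two inputs the author has already cited: the matrix description of multiplication in $\Sigma_{S_n}$ (the displayed identity $B_\kappa B_\nu = \sum_{\boldsymbol z} B_{r(\boldsymbol z)}$) and Theorem 2.1 of \cite{garsia-reutenauer}, which expresses how the descent algebra acts on the free Lie algebra. The strategy is not to recompute the action from scratch but to \emph{specialise} the full multiplication formula to the monomial setting and identify exactly which matrices $\boldsymbol z$ survive. First I would recall the Garsia--Reutenauer action: when a product of homogeneous Lie elements $P^{(\kappa)} = P_1^{(\kappa_1)}\cdots P_k^{(\kappa_k)}$ is acted on by $B_\nu$, each monomial in the resulting sum corresponds to a way of distributing the letters, and the crucial point is that bracketing forces the blocks coming from a single $P_i^{(\kappa_i)}$ to stay together. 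Translating this back through the matrix bookkeeping is what produces the extra column condition.

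Next I would set up the correspondence between matrices and monomials explicitly. A matrix $\boldsymbol z = (z_{ij})$ satisfying the column-sum condition $\sum_i z_{ij} = \kappa_j$ and the row-sum condition $\sum_j z_{ij} = \nu_i$ records how the $j$-th factor $P_j^{(\kappa_j)}$ of degree $\kappa_j$ is split across the $\nu_i$-blocks of the acting element $B_\nu$. The key observation to isolate is that for the action on a Lie monomial (as opposed to the full associative product), a factor $P_j^{(\kappa_j)}$ cannot be genuinely fragmented across several blocks without producing zero, because a homogeneous Lie element contributes nontrivially only when its entire degree is assigned to a single row of $\nu$. This is precisely the statement that column $j$ of $\boldsymbol z$ must have exactly one nonzero entry, which is condition (3). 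So the passage from multiplication to action is captured by intersecting the matrix sum with this single constraint, and for each surviving $\boldsymbol z$ the resulting monomial is read off by $r(\boldsymbol z)$ exactly as before.

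To finish I would verify the final ``moreover'' clause, namely that $P^{(\kappa)}B_\nu = 0$ unless adjacent components of $\kappa$ sum to the components of $\nu$. This should follow by examining when the constrained matrix system admits a solution at all. With exactly one nonzero entry per column, each column contributes its full weight $\kappa_j$ to a single row $i$; the row sum then forces $\nu_i = \sum_{j \text{ assigned to } i} \kappa_j$. Because the reading word $r(\boldsymbol z)$ preserves the left-to-right order of the factors, the only admissible assignments group \emph{consecutive} factors of $\kappa$ together, so each $\nu_i$ must equal a sum of adjacent $\kappa_j$'s; if no such grouping exists the system is infeasible and the action vanishes.

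The main obstacle I anticipate is the Lie-theoretic vanishing step, that is, rigorously justifying why a fragmented column forces a zero contribution. The matrix identity for $B_\kappa B_\nu$ is purely associative and makes no reference to bracketing, so the column condition does not follow from combinatorics alone; it genuinely requires the structural input of \cite[Theorem~2.1]{garsia-reutenauer} about how $\Sigma_{S_n}$ acts on the free Lie algebra, specifically that splitting a single homogeneous Lie element across distinct blocks annihilates it. Pinning down this comparison cleanly---matching the author's matrix variable $\boldsymbol z$ to Garsia--Reutenauer's distribution data and confirming that their zero terms are exactly the multi-nonzero-entry columns---is the delicate part; once that dictionary is established, conditions (1)--(3) and the adjacency clause drop out by direct inspection.
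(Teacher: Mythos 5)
Your route is the same as the paper's: the paper gives no detailed argument at all, presenting the proposition as an observation obtained ``by comparison of Theorem 2.1 of \cite{garsia-reutenauer} and the above description of multiplication in $\Sigma_{S_n}$ using matrices,'' and your dictionary---matrices $\boldsymbol{z}$ recording how the factors of the Lie monomial are distributed among the blocks of $\nu$, with the single structural input from \cite{garsia-reutenauer} being that a homogeneous Lie factor split across more than one block contributes zero, which is exactly condition (3)---is precisely that comparison, and you correctly flag it as the step that cannot come from the associative multiplication rule alone.

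There is, however, one step in your treatment of the final clause that fails. You assert that because $r(\boldsymbol{z})$ preserves the left-to-right order of the factors, ``the only admissible assignments group \emph{consecutive} factors of $\kappa$ together.'' That is not true: conditions (1)--(3) force the columns assigned to a given row to be read in increasing order, but not to be consecutive. Take $\kappa=[1,2,1]$ and $\nu=[2,2]$: the matrix with $z_{11}=1$, $z_{22}=2$, $z_{13}=1$ and all other entries $0$ satisfies (1)--(3), and correspondingly the Garsia--Reutenauer action gives $\bigl(a[b,c]d\bigr)B_{[2,2]}=ad[b,c]+[b,c]ad\neq 0$, even though no sums of consecutive components of $[1,2,1]$ produce $[2,2]$. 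So feasibility of the system is equivalent to the existence of \emph{some}, possibly non-consecutive, partition of the components of $\kappa$ into groups whose sums are the components of $\nu$; the factors in a group become adjacent only after the rearrangement recorded by $r(\boldsymbol{z})$, which is how the word ``adjacent'' in the proposition's last sentence has to be read. As you have argued it, the consecutiveness claim would contradict the first part of the proposition (the matrix sum above is nonempty while vanishing would be asserted), so this portion of your proof should be replaced by the empty-sum observation together with the correct characterisation of when matrices satisfying (1)--(3) exist.
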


An analogous proposition holds if we extend our definition of an $S_n$-Lie 
monomial to 
form a $B_n$-Lie monomial \cite{bergeron-n} and compare the action of $\Sigma 
_{B_n}$  on these \cite[Lemma 1.4]{bergeron-n} with multiplication in $\Sigma 
_{B_n}$ \cite[Theorem 1]{bergeron-bergeron}. Therefore since other results in 
this paper can be stated in a manner similar to analogous results in the 
symmetric and hyperoctahedral groups cases it is hoped that when $D_n$-Lie 
monomials $P^{(\kappa)}$ can be defined they will have the following properties stemming from 
multiplication in $\Sigma _{D_n}$ \cite[Theorem 1]{bergeron-vW}.

\begin{conjecture}
Let $\kappa ,\nu\in \C(n)$ and $P^{(\kappa)}:=P_1 ^{(\kappa _1)},\ldots ,P_k 
^{(\kappa 
_k)} $ then
$$P^{(\kappa)}B_\nu=\sum _{\boldsymbol{z}} \tilde{P}^{(r(\boldsymbol{z}))}$$ 
where the  
sum is over filled templates $\boldsymbol{z} \in Z(\kappa ,\nu)$ \cite[p 
701]{bergeron-vW} with exactly one non-zero entry in each 
column. 
Also $P^{(\eta)}$ is a summand of $\tilde{P}^{(r(\boldsymbol{z}))}$ if and only 
if $B_{(\eta)}$ is a summand of $\tilde{B}_{(r(\boldsymbol{z}))}$ as determined 
by \cite[Theorem 1]{bergeron-vW}.

Moreover $P^{(\kappa)}B_\nu=0$ unless adjacent components of $\kappa$ sum to 
give the components of $\nu$ and it is not the case that
\begin{enumerate}
\item $\kappa\in \C _n$, $\nu\in\C _n'$, all components are even, or
\item $\kappa\in \C _n'$, $\nu\in\C _n$, all components are even.
\end{enumerate}
\end{conjecture}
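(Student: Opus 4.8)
The plan is to follow the template set by the symmetric and hyperoctahedral cases: construct an explicit module on which $\Sigma_{D_n}$ acts, single out within it the elements $P^{(\kappa)}$ that are to play the role of $D_n$-Lie monomials, compute the action of each basis element $B_\nu$, and finally match the resulting expansion against the multiplication rule for $\Sigma_{D_n}$ recorded in \cite[Theorem 1]{bergeron-vW}. In the $S_n$ setting the Proposition above was obtained in exactly this way, by comparing \cite[Theorem 2.1]{garsia-reutenauer} with the matrix description of multiplication, and the extra hypothesis ``exactly one non-zero entry in each column'' is the algebraic shadow of the fact that a homogeneous Lie element $P_i^{(\kappa_i)}$ cannot be fragmented across several factors of the target monomial but must be carried whole into a single slot. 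I would expect the $D_n$ computation to produce the same column condition for the same reason, now phrased through the filled templates of $Z(\kappa,\nu)$ \cite[p 701]{bergeron-vW} in place of ordinary nonnegative-integer matrices.

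Granting a workable definition of $P^{(\kappa)}$, the main body of the argument would be a bookkeeping comparison. The product $B_\kappa B_\nu$ in $\Sigma_{D_n}$ is a sum over all filled templates in $Z(\kappa,\nu)$; restricting to those templates with a single non-zero entry per column should isolate precisely the terms in which no bracket is split, and reading the non-zero entries by rows should yield the composition $r(\boldsymbol{z})$ indexing the summand $\tilde{P}^{(r(\boldsymbol{z}))}$. The requested biconditional, that $P^{(\eta)}$ appears in $\tilde{P}^{(r(\boldsymbol{z}))}$ exactly when $B_{(\eta)}$ appears in $\tilde{B}_{(r(\boldsymbol{z}))}$, should then be inherited directly from \cite[Theorem 1]{bergeron-vW}, since the two expansions are built from the same templates by the same reading rule. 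The ``adjacent components of $\kappa$ sum to give the components of $\nu$'' vanishing criterion is the usual refinement condition and should drop out of the column restriction, exactly as in the final sentence of the Proposition.

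The genuinely new feature is the pair of exceptional vanishing conditions, and here I would lean on Lemma \ref{conj}. Those two cases, $\kappa\in\C_n$, $\nu\in\C_n'$ (or the reverse) with all components even, are precisely the configurations that $\approx$ forbids, and the proof of Lemma \ref{conj} already exhibits the obstruction: any bijection effecting the required correspondence between a block $\{s_i,\dots,s_{i+2k}\}$ and $\{s_{1'},s_2,\dots,s_{2k+1}\}$ is forced to have odd parity and so lies outside $D_n$. I would therefore argue that the same parity defect that kills the conjugacy kills every admissible filling, so that no filled template survives to contribute and the sum is empty, forcing $P^{(\kappa)}B_\nu=0$.

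The hard part, and the reason this remains a conjecture rather than a theorem, is the very first step: producing a definition of the $D_n$-Lie monomials $P^{(\kappa)}$. Unlike the $S_n$ case, where the free Lie algebra and the Garsia--Reutenauer action are ready to hand, and the $B_n$ case of \cite{bergeron-n}, there is as yet no type-$D$ analogue carrying an honest $\Sigma_{D_n}$-action. A natural first attempt would be to realise $D_n$-Lie monomials inside the $B_n$ construction of \cite{bergeron-n} via the index-two inclusion $D_n\subset B_n$, building the even-sign-change parity constraint into the monomials so that the all-even exceptional cases emerge automatically; but reconciling such a restriction or projection with the doubled indexing by $\C_n$ and $\C_n'$ is exactly where the difficulty lies, and is what would have to be resolved before the comparison sketched above could be carried out.
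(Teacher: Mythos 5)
This statement is a \emph{conjecture} in the paper, not a theorem: the paper supplies no proof of it, and indeed states explicitly that the central obstruction is that ``there has yet to be success in finding such monomials,'' i.e.\ no definition of $D_n$-Lie monomials $P^{(\kappa)}$ currently exists. So there is no proof in the paper against which your argument can be matched, and your proposal is not a proof either --- a point you yourself concede in your final paragraph. The gap is not a missing lemma but something more basic: until $P^{(\kappa)}$ is defined as an element of some concrete $\Sigma_{D_n}$-module, the identity $P^{(\kappa)}B_\nu=\sum_{\boldsymbol{z}}\tilde{P}^{(r(\boldsymbol{z}))}$ is not a well-formed mathematical assertion, and every subsequent step of your plan --- restricting $Z(\kappa,\nu)$ to templates with one non-zero entry per column, inheriting the summand biconditional from \cite[Theorem 1]{bergeron-vW}, and deriving the two exceptional vanishing cases from the parity obstruction in the proof of Lemma~\ref{conj} --- is contingent bookkeeping with nothing yet to keep books on. In particular, your claim that ``the same parity defect that kills the conjugacy kills every admissible filling'' cannot even be tested: whether the sum is empty in those cases depends on condition (4) defining $Z(\kappa,\nu)$ interacting with the column restriction, and verifying that interaction requires knowing what the module elements are.

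That said, your sketch is faithful to the heuristic the paper itself offers: the conjecture is motivated there exactly as you motivate it, by comparing the $S_n$ Proposition (obtained from \cite[Theorem 2.1]{garsia-reutenauer} versus the matrix multiplication rule) and the analogous $B_n$ statement from \cite{bergeron-n} with the template multiplication rule of \cite[Theorem 1]{bergeron-vW}. Your one genuinely new contribution --- attempting to realise $D_n$-Lie monomials inside the $B_n$ construction via the index-two inclusion $D_n\subset B_n$, with the even-sign-change constraint built in so that the all-even exceptional cases emerge automatically --- is a concrete and sensible line of attack that the paper does not propose, and correctly locates the difficulty in reconciling such a restriction with the doubled indexing by $\C_n$ and $\C_n'$. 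But it remains a research programme, not an argument; the conjecture stands open both in the paper and after your proposal.
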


\end{document}